\documentclass[12pt,twoside]{article}

\usepackage{fancyhdr}
\usepackage{amsmath}
\usepackage{amscd}
\usepackage{amssymb}
\usepackage{amsfonts}
\usepackage{euscript}
\usepackage{oldgerm}
\usepackage{calligra}
\usepackage{mathrsfs}
\usepackage{hyperref}
\usepackage{url}
\usepackage{lastpage}
\usepackage{multirow}
\usepackage{graphicx}
\usepackage{stmaryrd}
\usepackage{wasysym}
\usepackage{pifont}
\usepackage{diagrams}
\usepackage{draftwatermark}
\SetWatermarkAngle{45}
\SetWatermarkLightness{0.9}
\SetWatermarkFontSize{5cm}
\SetWatermarkScale{2}
\SetWatermarkText{ }

\voffset=-2.5cm \hoffset=-1.625cm
\topskip=-3cm
\textwidth=19cm \textheight=24.5cm
\addtolength{\evensidemargin}{-1.75cm}

\renewcommand{\thefootnote}{\fnsymbol{footnote}}

\long\def\sfootnote[#1]#2{\begingroup
\def\thefootnote{\fnsymbol{footnote}}\footnote[#1]{#2}\endgroup}

\newtheorem{theorem}{Theorem}[section]

\newtheorem{corollary}[theorem]{Corollary}

\newenvironment{proof}{\noindent\mbox{\bf Proof.}}
{\hfill\mbox{\ding{111}}\bigskip}

%%%%%%%%%%%%%%%%%%%%%%%%%%%%%%%%%%%%
%%%%%%%%%%%%%%%%%%%%%%%%%%%%%%%%%%%%
%%%%%%%%%%%%%%%%%%%%%%%%%%%%%%%%%%%%
%%%%%%%%%%%%%%%%%%%%%%%%%%%%%%%%%%%%
%%%%%%%%%%%%%%%%%%%%%%%%%%%%%%%%%%%%

%%%%%%%%%%%%%%%%%%%%%%%%%%%%%%%%%%%%
%%%%%%%%%%%%%%%%%%%%%%%%%%%%%%%%%%%%
%%%%%%%%%%%%%%%%%%%%%%%%%%%%%%%%%%%%
%%%%%%%%%%%%%%%%%%%%%%%%%%%%%%%%%%%%
%%%%%%%%%%%%%%%%%%%%%%%%%%%%%%%%%%%%

\begin{document}

\pagestyle{fancy}
\lhead[page \thepage \ (of \pageref{LastPage})]{}
\chead[{\bf  Diagonalizing by Fixed--Points}]{{\bf  Diagonalizing by Fixed--Points}}
\rhead[]{page \thepage \ (of \pageref{LastPage})}
\lfoot[\copyright\ {\sf Ahmad Karimi \& Saeed Salehi 2014}]
{$\varoint^{\Sigma\alpha\epsilon\epsilon\partial}_{\Sigma\alpha\ell\epsilon\hslash\imath}
\centerdot${\footnotesize {\rm ir}}}
\cfoot[{\footnotesize {\tt  }}]{{\footnotesize {\tt  }}}
\rfoot[$\varoint^{\Sigma\alpha\epsilon\epsilon\partial}_{\Sigma\alpha\ell\epsilon\hslash\imath}\centerdot$
{\footnotesize {\rm ir}}]{\copyright\ {\sf Ahmad Karimi \& Saeed Salehi 2014}}
\renewcommand{\headrulewidth}{1pt}
\renewcommand{\footrulewidth}{1pt}
\thispagestyle{empty}

\begin{center}
\begin{table}
\hspace{0.75em}
\begin{tabular}{| c | l  || l | c |}
\hline
 \multirow{7}{*}{\includegraphics[scale=0.65]{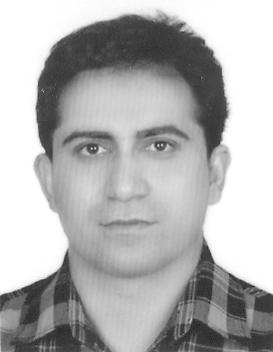}}&    &  &
 \multirow{7}{*}{ \ \ } \ \ \ \ \\
 &     \ \ {\large{\sc Ahmad Karimi}}  \ \  \ & \ \    Tel: \, +98 (0)919 510 2790     &  \\
 &   \ \ Department of  Mathematics \ \  \ & \ \ Fax: \ +98 (0)21 8288 3493    & \\
 &   \ \ Tarbiat Modares University  \ \ \  & \ \ E-mail: \!\!{\tt  a.karimi40}{\sf @}{\tt  yahoo.com}   &  \\
 &  \ \ P.O.Box  14115--134  \ \ \ &   \ \  Behbahan KA Univ. of  Tech.    &  \\
 &   \ \ Tehran, IRAN \ \ \ &  \ \   61635--151 Behbahan, IRAN &   \\
 &    &  &  \\
 \hline
\hline
 \multirow{7}{*}{\includegraphics[scale=0.4]{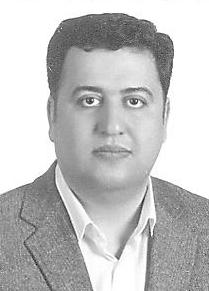}}&    &  &
 \multirow{7}{*}{${\huge \varoint^{\Sigma\alpha\epsilon\epsilon\partial}_{\Sigma\alpha\ell\epsilon\hslash\imath}\centerdot}${{\rm ir}}} \\
 &     \ \ {\large{\sc Saeed Salehi}}  \ \  \ & \ \    Tel: \, +98 (0)411 339 2905    &  \\
 &   \ \ Department of Mathematics \ \  \ & \ \ Fax: \ +98 (0)411 334 2102   & \\
 &   \ \ University of Tabriz \ \ \  & \ \ E-mail: \!\!{\tt /root}{\sf @}{\tt SaeedSalehi.ir/}   &  \\
 &  \ \ P.O.Box 51666--17766 \ \ \ &   \ \ \ \ {\tt /SalehiPour}{\sf @}{\tt TabrizU.ac.ir/}   &  \\
 &   \ \ Tabriz, IRAN \ \ \ & \ \ Web: \  \ {\tt http:\!/\!/SaeedSalehi.ir/}  &  \\
 &    &  &  \\
 \hline
\end{tabular}
\end{table}
\end{center}

\vspace{1.5em}

\begin{center}
{\bf {\Large Diagonalizing by Fixed--Points
}}
\end{center}

\vspace{1.5em}

\begin{abstract}
 A universal schema for diagonalization was popularized by {\sc N. S.
Yanofsky}  (2003) in which the
existence of a (diagonolized-out and contradictory) object implies
the existence of a fixed-point for a certain function. It was shown
that many self-referential paradoxes and diagonally proved theorems
can fit in that schema. Here, we fit more theorems in the universal
schema of diagonalization, such as {\sc Euclid}'s theorem on the
infinitude of the primes and new proofs of {\sc G. Boolos} (1997) for {\sc Cantor}'s theorem on the
non-equinumerosity of a set with its powerset.
Then, in Linear Temporal Logic, we show
the non-existence of a fixed-point in this logic whose
proof resembles the argument of
{\sc Yablo}'s paradox. Thus, Yablo's paradox turns for the
first time into a genuine  mathematico-logical theorem in
the framework of Linear Temporal Logic.
Again the diagonal schema of the paper is used in this proof;
and also it is shown that
{\sc G. Priest}'s
inclosure schema (1997) can fit in our
universal diagonal/fixed-point schema. We also show the
existence of dominating ({\sc Ackermann}-like) functions (which dominate a given countable set of functions---like
primitive recursives) using the
schema.

\bigskip

\bigskip

\centerline{${\backsim\!\backsim\!\backsim\!\backsim\!\backsim\!\backsim\!\backsim\!
\backsim\!\backsim\!\backsim\!\backsim\!\backsim\!\backsim\!\backsim\!
\backsim\!\backsim\!\backsim\!\backsim\!\backsim\!\backsim\!\backsim\!
\backsim\!\backsim\!\backsim\!\backsim\!\backsim\!\backsim\!\backsim\!
\backsim\!\backsim\!\backsim\!\backsim\!\backsim\!\backsim\!\backsim\!
\backsim\!\backsim\!\backsim\!\backsim\!\backsim\!\backsim\!\backsim\!
\backsim\!\backsim\!\backsim\!\backsim\!\backsim\!\backsim\!\backsim\!
\backsim\!\backsim\!\backsim\!\backsim\!\backsim\!\backsim\!\backsim\!
\backsim\!\backsim\!\backsim\!\backsim\!\backsim\!\backsim\!\backsim\!
\backsim\!\backsim\!\backsim\!\backsim\!\backsim\!\backsim\!\backsim\!
\backsim\!\backsim\!\backsim}$}

\bigskip

\noindent {\bf 2010 Mathematics Subject Classification}:  18A10
$\cdot$ 18A15 $\cdot$  03B44  $\cdot$ 03A05.

\noindent {\bf Keywords}:  Diagonalization $\cdot$ Self-Reference $\cdot$ Fixed-Points $\cdot$ Cantor's
Theorem $\cdot$  Euclid's Theorem  $\cdot$ Yablo's Paradox  $\cdot$
Ackermann's Function $\cdot$   Dominating   Functions $\cdot$ (Linear) Temporal Logic.
\end{abstract}

\bigskip

\medskip

\vfill

\hspace{.75em} \fbox{\textsl{\footnotesize Date: 30.08.14  (30 August 2014)}}

\vfill

\bigskip
\noindent\underline{\centerline{}}
\centerline{page 1 (of \pageref{LastPage})}

%%%
%%% the paper begins ...
%%%

\newpage
\setcounter{page}{2}
\SetWatermarkAngle{65}
\SetWatermarkLightness{0.925}
%\SetWatermarkFontSize{30cm}
\SetWatermarkScale{2.25}
\SetWatermarkText{\!\!\!\!\!\!\!\!\!\!\!\!\!\!\!\!\!\!\!\!\!\!
{\sc MANUSCRIPT (Submitted)}}

%%%%%%%%%%%%%%%%%%%%%%%%%%%%%%%%%%%%
%%%%%%%%%%%%%%%%%%%%%%%%%%%%%%%%%%%%
%%%%%%%%%%%%%%%%%%%%%%%%%%%%%%%%%%%%
%%%%%%%%%%%%%%%%%%%%%%%%%%%%%%%%%%%%
%%%%%%%%%%%%%%%%%%%%%%%%%%%%%%%%%%%%

%%%%%%%%%%%%%%%%%%%%%%%%%%%%%%%%%%%%
%%%%%%%%%%%%%%%%%%%%%%%%%%%%%%%%%%%%
%%%%%%%%%%%%%%%%%%%%%%%%%%%%%%%%%%%%
%%%%%%%%%%%%%%%%%%%%%%%%%%%%%%%%%%%%
%%%%%%%%%%%%%%%%%%%%%%%%%%%%%%%%%%%%
%--------------------------------------

\section{Introduction}
{\sc Cantor}'s Diagonal Argument was introduced  in his
(third proof for the) famous theorem on  non--denumerability of the
reals; the argument shows that there can be no surjection from a
set $A$ to its powerset $\mathscr{P}(A)$: for any function
$F:A\rightarrow\mathscr{P}(A)$ the set ${D}_F=\{x\in A\mid
x\not\in F(x)\}$ is not  in the range of $F$ because for any $a\in
A$ we have $a\in {D}_F\longleftrightarrow a\not\in F(a)$, and
so $a\in({D}_F\setminus
F(a))\cup(F(a)\setminus {D}_F)$, whence ${D}_F\neq
F(a)$. This argument shows up also in {\sc Russell}'s Paradox: the
collection $R=\{x\mid x\not\in x\}$ of sets is not a set, since for
any set $A$ we have $A\in R\longleftrightarrow A\not\in A$, so
$A\neq R$.  One other example is {\sc Turing}'s Halting Problem in Computability Theory: if $W_0,
W_1, W_2, \cdots$ is the family of all {\sc re} sets (recursively
enumerable subsets of $\mathbb{N}$), then the set
$\overline{\mathcal{K}}=\{n\in\mathbb{N}\mid n\not\in W_n\}$ is not
{\sc re}, because for any {\sc re} set $W_m$ we have
$m\in\overline{\mathcal{K}}\longleftrightarrow m\not\in W_m$, and so
$m\in(\overline{\mathcal{K}}\setminus
W_m)\cup(W_m\setminus\overline{\mathcal{K}})$, thus
$\overline{\mathcal{K}}\neq W_m$. It can be seen that the (diagonal)
set $\mathcal{K}=\{n\in\mathbb{N}\mid n\in W_n\}$ is an {\sc re} but
undecidable set.

Many other theorems in mathematics (logic, set theory, computability
theory, complexity theory, etc.) use diagonal arguments;  {\sc Tarski}'s theorem on the undefinability of
truth, and {\sc G\"odel}'s theorem on the incompleteness of sufficiently
strong and ($\omega$--)consistent theories are two prominent examples. In 2003, {\sc Noson S.
Yanofsky} published the paper \cite{yan} mentioning some earlier
descriptions for ``many of the classical paradoxes and
incompleteness theorems in a categorial fashion", in the sense that
by using ``the language of category theory (and of cartesian closed
categories in particular)" one can demonstrate some paradoxical
phenomena and show the above mentioned theorems of {\sc Cantor, Tarski
and G\"odel}; the goal of \cite{yan} was ``to make these amazing
results available to a larger audience". In that paper, a universal
schema has been considered in the language of sets and functions
(not categories) and the paradoxes of the Liar, the strong liar,
{\sc Russell, Grelling, Richard}, Time Travel, and {\sc L\"ob}, and the theorems
of {\sc Cantor} ($A\lvertneqq\mathscr{P}(A)$), {\sc Turing} (undecidability of
the Halting problem, and existence of a non--{\sc re} set),
{\sc Baker-Gill-Solovay} (the existence of an oracle $\mathcal{O}$ such
that $\mathbf{P}^\mathcal{O}\neq\mathbf{NP}^\mathcal{O}$), {\sc Carnap}
(the diagonalization lemma), {\sc G\"odel} (first incompleteness theorem),
{\sc Rosser} (incompleteness of sufficiently strong and consistent
theories), {\sc Tarski} (undefinability of truth in sufficiently strong
languages), {\sc Parikh} (existence of sentences with very long proofs),
{\sc Kleene} (Recursion Theorem), {\sc Rice} (undecidability of non--trivial
properties of recursive functions), and {\sc von Neumann}  (existence of
self--reproducing machines) are shown as instances.

In this paper,
we fit some other theorems and proofs into the above mentioned
universal schema of {\sc Yanofsky};
these include {\sc Euclid}'s Theorem on the infinitude of the primes, {\sc Boolos}' proof of the existence of some explicitly definable counterexamples to the non--injectivity of  functions $F:\mathscr{P}(A)\rightarrow A$ for any set $A$, {\sc Yablo}'s paradox in a form of a mathematical theorem in the framework of linear temporal logic as a non--existence of some certain fixed--points, and the existence of dominating functions for a given countable set of functions like {\sc Ackermann}'s function which dominates all the primitive recursive functions.
In the rest of the introduction we
fix our notation and introduce the common framework.

\subsection{Cantor's Theorem by Fixed--Points}
Let $B$, $C$ and $D$ be arbitrary sets. Any function $f:B\times
C\rightarrow D$ corresponds to a function $\widehat{f}:C\rightarrow
D^B$ where $\widehat{f}(c)(b)=f(b,c)$ for any $b\in B$ and $c\in C$
(the set $D^B$ consists of all the functions from $B$ to $D$).
Conversely, for any function $F:C\rightarrow D^B$ there exists some
$f:B\times C\rightarrow D$ such that $\widehat{f}=F$: for any $b\in
B$ and $c\in C$ let $f(b,c)=F(c)(b)$. In the other words  \
$\widehat{ - }:D^{B\times C}\cong (D^B)^C$. Let $f:B\times
C\rightarrow D$ be a fixed function. A function $g:B\rightarrow D$
is called representable by $f$ at a fixed $c_0\in C$, when for any
$x\in B$, $g(x)=f(x,c_0)$ holds. In the other words,
$g=\widehat{f}(c_0)$. So, the function $\widehat{f}:C\rightarrow
D^B$ is onto if and only if every function $B\rightarrow D$ is
representable by $f$ at some $c_0\in C$.

\begin{theorem}[Cantor]\label{thm:cantor}
Assume the function $\alpha:D\longrightarrow D$, for a set $D$, does
not have any fixed point (i.e., $\alpha(d)\neq d$ for all $d\in D$).
Then for any set $B$ and any function $f:B\times B\rightarrow D$ there
exists a function $g:B\rightarrow D$ that is not representable by
$f$ (i.e., for all $b\in B$, $g(-)\neq f(-,b)$).
\end{theorem}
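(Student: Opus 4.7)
The plan is to run the standard diagonal construction through the assumption that $\alpha$ is fixed-point-free. Concretely, I would define the candidate non-representable function $g: B \to D$ by composing $\alpha$ with the ``diagonal'' of $f$, namely
\[
g(x) \;=\; \alpha\bigl(f(x,x)\bigr) \qquad \text{for every } x \in B.
\]
This is well-defined since $f(x,x) \in D$ and $\alpha$ maps $D$ to $D$. The whole point of twisting the diagonal $x \mapsto f(x,x)$ by $\alpha$ is to force any would-be representing index to produce a fixed point of $\alpha$.

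Next, I would argue by contradiction: suppose $g$ is representable by $f$, so there exists $b_0 \in B$ with $g(x) = f(x,b_0)$ for all $x \in B$. Specializing to $x = b_0$ yields
\[
\alpha\bigl(f(b_0,b_0)\bigr) \;=\; g(b_0) \;=\; f(b_0,b_0),
\]
so the element $d_0 := f(b_0,b_0) \in D$ satisfies $\alpha(d_0) = d_0$. This contradicts the hypothesis that $\alpha$ has no fixed point, so no such $b_0$ exists, and $g$ witnesses the conclusion.

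There is essentially no obstacle: the proof is purely a matter of writing down the right $g$ and evaluating the hypothetical representation on the diagonal. The only conceptual point worth emphasizing is why one must pre-compose with $\alpha$ rather than just take $g(x) = f(x,x)$: without the twist, $g$ itself would be representable (at $b_0$ equal to any witness of $x\mapsto f(x,x)$), so $\alpha$ is exactly what converts a potential coincidence into the forbidden equation $\alpha(d_0)=d_0$. This is precisely the ``universal diagonal/fixed-point'' pattern the paper is advertising, and the Cantor-style corollary will follow by choosing $D = \{0,1\}$ and $\alpha$ to be negation.
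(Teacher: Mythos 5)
Your proposal is correct and is essentially identical to the paper's own proof: both define $g(x)=\alpha(f(x,x))$ (the composite $\alpha\circ f\circ\triangle_B$) and derive the fixed point $\alpha(f(b,b))=f(b,b)$ from any hypothetical representing index $b$. No gaps; the argument is complete as written.
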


\begin{proof}
The desired function $g:x\mapsto\alpha(f(x,x))$ can be constructed
as follows:
\begin{diagram}
B\times B & \rTo^f & D \\
\uTo^{\triangle_B} & & \dTo_\alpha\\
B & \rTo_g & D\\
\end{diagram}
where $\triangle_B$ is the diagonal function of $B$
($\triangle_B(x)=\langle x,x\rangle$). If $g$ is representable by
$f$ at $b\in B$, then $g(x)=f(x,b)$ for any $x\in B$, and in
particular $g(b)=f(b,b)$. On the other hand by the definition of $g$
we have $g(x)=\alpha(f(x,x))$ and in particular (for $x=b$)
$g(b)=\alpha(f(b,b))$. It follows that $f(b,b)$ is a fixed--point of
$\alpha$; contradiction. Whence, the function $g$ is not
representable by $f$ (at any $b\in B$).
\end{proof}

For any set $A$ we have $\mathscr{P}(A)\cong\mathbf{2}^A$ where
$\mathbf{2}=\{0,1\}$ and $\mathbf{2}^A$ is the set of all functions
from $A$ to $\mathbf{2}$. So, Cantor's theorem is equivalent to the
non--existence of a surjection $A\rightarrow\mathbf{2}^A$. Putting
it another way, Cantor's theorem says that for any function
$f:A\times A\rightarrow\mathbf{2}$   there exists a function
$g:A\rightarrow\mathbf{2}$ which is not representable by $f$ (at any
member of $A$). In this new setting, Cantor's proof goes as follows:
let $\triangle_A:A\rightarrow A\times A$ be the diagonal function of
$A$ ($\triangle_A(x)=\langle x,x\rangle$) and let
$\alpha:\mathbf{2}\rightarrow\mathbf{2}$ be a fixed function. Define
$g:A\rightarrow\mathbf{2}$  by $g(x)=\alpha(f(\triangle_A(x)))$. If
$g$ is representable by $f$ and fixed $a\in A$ then
$f(a,a)=g(a)=\alpha(f(a,a))$, which shows that $\alpha$ has a
fixed--point (namely, $f(a,a)$). So, for reaching to a
contradiction, we need to take a function
$\alpha:\mathbf{2}\rightarrow\mathbf{2}$ which does not have any
fixed--point; and the only such function (without any fixed--point)
is the negation function ${\tt
neg}:\mathbf{2}\rightarrow\mathbf{2}$, ${\tt neg}(i)=1-i$ for
$i=0,1$. For a function $F:A\rightarrow\mathscr{P}(A)$ let
$f:A\times A\rightarrow\mathbf{2}$ be defined as
$$f(a,a')=\left\{\begin{array}{l} 1 \quad \textrm{if}\ \  a\in F(a')
\\ 0  \quad \textrm{if}\ \ a\not\in F(a')\\
\end{array} \right.$$
 The function $g$ constructed by the
diagram
\begin{diagram}
A\times A & \rTo^f & \mathbf{2} \\
\uTo^{\triangle_A} & & \dTo_{\tt neg}\\
A & \rTo_g & \mathbf{2}\\
\end{diagram}
is the characteristic function of the set $D_F=\{x\in A\mid x\not\in
F(x)\}$. That $g$ is not representable by $f$ (at any $a\in A$) is
equivalent to saying that the set $D_F$ is not in the range of $F$
(i.e., $D_F\neq F(a)$ for any $a\in A$).

In the rest of the paper  we will fit some theorems in the diagram
of the proof of Theorem~\ref{thm:cantor} by varying the sets $B$ and
$D$ (and the functions $f$ and $\alpha$); most of the times
$D=\mathbf{2}$ (and $\alpha={\tt neg}$) will hold, just like the
above diagram.

\section{Euclid's Theorem on the Infinitude of the Primes}
Our first instance of Cantor's Diagonal Proof starts with a
surprise: the ancient theorem of Euclid stating that there are
infinitely many prime numbers. We use (almost) the classical proof
of Euclid which seems far from being a diagonal argument. Indeed
there are many different proofs of this theorem in the literature,
and ours is not a new one; we just fit a proof in a diagonal diagram
as above.

\begin{theorem}[Euclid]\label{euclid}
There are infinitely many prime numbers in $\mathbb{N}$.
\end{theorem}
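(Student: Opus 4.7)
The plan is to proceed by contradiction, assuming there are only finitely many primes, say $P = \{p_1,\ldots,p_n\}$, and to recast Euclid's classical ``product plus one'' argument as an instance of the schema of Theorem~\ref{thm:cantor}. Concretely, I would take $B = P$, $D = \mathbb{N}$, and let $\alpha\colon\mathbb{N}\to\mathbb{N}$ send $m$ to the least prime factor of $m+1$. For $f\colon P\times P\to\mathbb{N}$ I would take the constant function with value $N := \prod_{r\in P} r$. The diagonal $g = \alpha\circ f\circ\triangle_P$ is then the constant function taking the value $q^{*} := \alpha(N)$, the smallest prime divisor of $N+1$---exactly the ``new'' prime that falls out of Euclid's construction.

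The first thing to check is that $\alpha$ is fixed-point-free, so that Theorem~\ref{thm:cantor} applies: if $\alpha(m) = m$ then $m$ would be a prime dividing $m+1$, forcing $m \mid 1$, which is impossible. With this in hand, the contradiction comes out directly from the diagram: since $P$ was assumed to contain \emph{all} primes, the prime $q^{*}$ must itself lie in $P$, so $q^{*} \mid N$; combined with $q^{*} \mid N+1$ this yields $q^{*} \mid 1$, an absurdity. Hence $P$ cannot exhaust the primes, and the set of primes must be infinite.

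The only real subtlety is identifying the correct $\alpha$. The whole point of the diagonal packaging is that Euclid's move ``take any prime divisor of $N+1$'' is, underneath, an appeal to the fixed-point-freeness of the map $m\mapsto\text{least prime factor of }m+1$; the proof that this map has no fixed point is itself the heart of Euclid's argument, namely the observation $p \mid p+1 \Rightarrow p \mid 1$. Once $\alpha$ is in place, the choice of $B = P$ together with $f$ constantly equal to $N$ is essentially forced by the desire to make $\alpha\circ f\circ\triangle_B$ evaluate to Euclid's $q^{*}$, and the rest of the argument then mirrors the proof of Theorem~\ref{thm:cantor} line by line.
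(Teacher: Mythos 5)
Your underlying number theory is Euclid's classical argument and is correct as far as it goes: the least prime factor $q^{*}$ of $N+1$ cannot divide $N$, so if $P$ were the set of all primes you get $q^{*}\mid 1$, a contradiction. But the claimed reduction to Theorem~\ref{thm:cantor} does not actually do any work, and the sentence ``the contradiction comes out directly from the diagram'' is not right. With your choices ($f$ constantly equal to $N$, $\alpha(m)=$ least prime factor of $m+1$), the conclusion that Theorem~\ref{thm:cantor} delivers is only that $g$ is not representable by $f$, i.e.\ that the constant $q^{*}$ differs from the constant $N$ --- a triviality that is nowhere near the contradiction you need. The actual contradiction ($q^{*}\in P$, hence $q^{*}\mid N$ and $q^{*}\mid N+1$) is obtained by a separate direct argument in which the diagonal $\triangle_P$, the two-variable structure of $f$, and the fixed-point-freeness of $\alpha$ play no role. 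Since the whole point of this theorem's placement in the paper is to exhibit Euclid's proof as a genuine instance of the fixed-point schema, this is the gap: your packaging is decorative rather than load-bearing. (A smaller issue: $\alpha(0)$ is undefined since $1$ has no prime factor, so $\alpha$ is not a total map $\mathbb{N}\to\mathbb{N}$ as required by the schema.)

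The paper's proof makes the schema carry the contradiction. It takes $B=\mathbb{N}$, $D=\mathbf{2}$, $\alpha={\tt neg}$, and defines $f(n,m)=1$ exactly when every prime factor of $n!+1$ is less than $m$. The number-theoretic content is concentrated in the single lemma that no prime factor of $n!+1$ is $\leq n$ (if $d\leq n$ and $d\mid n!+1$ then $d\mid 1$), which forces $f(n,n)=0$ and hence $g(n)={\tt neg}(f(n,n))=1$ for all $n$. If there were a bound $\mathfrak{p}$ above all primes, then $f(n,\mathfrak{p})=1$ for all $n$, so $g$ would be representable by $f$ at $\mathfrak{p}$, and Theorem~\ref{thm:cantor} --- via the fixed-point-freeness of ${\tt neg}$ --- supplies the contradiction. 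Note the essential differences: the paper uses $n!+1$ rather than the product of all primes plus one precisely so that $f$ is a genuinely two-variable function whose diagonal and whose column at $\mathfrak{p}$ disagree everywhere; and the hypothesis ``finitely many primes'' enters as the representability of $g$, not as an external divisibility argument. If you want to repair your version, you would need to reformulate it so that the assumed finiteness of the primes yields representability of your $g$ by your $f$ at some point of $B$; as written it does not.
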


\begin{proof}
Define the function
$f:\mathbb{N}\times\mathbb{N}\rightarrow\mathbf{2}$     as follows:
$$f(n,m)=\left\{\begin{array}{l} 1 \quad  \textrm{if all the prime factors of}\  (n!+1)\  \textrm{are
less than}\  m
\\ 0  \quad \textrm{if some prime factor of}\ (n!+1) \ \textrm{is
greater than or equal to}\  m\\
\end{array} \right.$$
For example, $f(4,9)=1$
because $4!+1=25$ and it has no other prime factor but $5$ and
$5<9$; it can be seen that $f(4,m)=0$ for all $m\leqslant 5$ and
$f(4,m)=1$ for all $m>5$. Indeed, for any $n\in\mathbb{N}$ we have
$f(n,n)=0$ because no prime factor of $n!+1$ can be less than $n$:
for any $d<n$ if $d\mid (n!+1)$ then from $d\mid n!$ it follows that
$d\mid 1$ so $d$ cannot be a prime. Now, consider the function
$g:\mathbb{N}\rightarrow\mathbf{2}$ constructed as
\begin{diagram}
\mathbb{N}\times\mathbb{N} & \rTo^f & \mathbf{2} \\
\uTo^{\triangle_\mathbb{N}} & & \dTo_{\tt neg}\\
\mathbb{N} & \rTo_g & \mathbf{2}\\
\end{diagram}
If all the prime numbers are less than $\mathfrak{p}\in\mathbb{N}$
then the function $g$ is representable by $f$ at $\mathfrak{p}$: for
any $n\in\mathbb{N}$, $f(n,\mathfrak{p})=1$ and $g(n)={\tt
neg}(f(n,n))=1$; whence $g(n)=f(n,\mathfrak{p})$ for all
$n\in\mathbb{N}$. A contradiction follows as before: if such a
number $\mathfrak{p}$ exists, then $f(\mathfrak{p},\mathfrak{p})$
becomes a fixed--point of ${\tt neg}$. So, there exists no
$\mathfrak{p}\in\mathbb{N}$ such that all the primes are non--greater  than
$\mathfrak{p}$; whence there must be infinitely many prime numbers.
\end{proof}

\noindent
 This surprising argument, we believe, deserves another closer look:
define the function $F:\mathbb{N}\rightarrow\mathscr{P}(\mathbb{N})$ by $$F(n)=\{x\in\mathbb{N}\mid n \textrm{ is greater than or equal to all the prime factors of } (x!+1)\}.$$
Cantor's Theorem says that $F$ cannot be surjective, or, more explicitly, $D_F=\{n\mid n\not\in F(n)\}$ the (anti--diagonal) set  is not equal to any $F(m)$. A number--theoretic argument shows that $D_F=\mathbb{N}$ because for any $n$ all the prime factors of $(n!+1)$ are greater than $n$ (see the proof of the above Theorem~\ref{euclid}). On the other hand if $\mathfrak{p}\in\mathbb{N}$ is the greatest prime, then $F(\mathfrak{p})=\mathbb{N}=D_F$, a contradiction!

\section{Some Other Proofs for Cantor's Theorem}
In 1997 late George Boolos published another proof \cite{boolos} for
Cantor's Theorem, by showing that there cannot be any injection from
the powerset of a set to the set. This proof has been (implicitly or
explicitly) mentioned also in \cite{gch,yet} (but without referring
to the earlier \cite{boolos}). The first proof is
essentially Cantor's Diagonal Argument.

\begin{theorem}
No function $h:\mathscr{P}(A)\rightarrow A$ can be injective.
\end{theorem}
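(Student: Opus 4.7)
The plan is to recast Boolos's argument as an instance of the schema used in Theorem~\ref{thm:cantor}, with $B=A$, $D=\mathbf{2}$, and $\alpha=\texttt{neg}$. The key observation is that the injectivity of $h$ is exactly what we need to turn the ``inverse image'' operation into a \emph{function}, and hence to manufacture a concrete $f:A\times A\rightarrow\mathbf{2}$.

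Suppose, for a contradiction, that $h:\mathscr{P}(A)\rightarrow A$ is injective. I would define
\[
f(x,y)=\begin{cases} 1 & \text{if there exists }S\in\mathscr{P}(A)\text{ with }h(S)=y\text{ and }x\in S,\\ 0 & \text{otherwise.}\end{cases}
\]
Note that if $y$ is in the range of $h$, then by injectivity there is a \emph{unique} such $S$, namely $S=h^{-1}(y)$, so $f(x,y)=1$ iff $x\in h^{-1}(y)$; and if $y$ is not in the range of $h$, then $f(x,y)=0$ automatically. Without the injectivity assumption, this clause would be ambiguous, which is precisely where the hypothesis enters.

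Now I would apply Theorem~\ref{thm:cantor} with $\alpha=\texttt{neg}$, producing the function $g:A\rightarrow\mathbf{2}$ given by $g(x)=\texttt{neg}(f(x,x))$, via the diagram
\begin{diagram}
A\times A & \rTo^f & \mathbf{2} \\
\uTo^{\triangle_A} & & \dTo_{\tt neg}\\
A & \rTo_g & \mathbf{2}\\
\end{diagram}
By the theorem, $g$ is not representable by $f$ at any point of $A$. To derive a contradiction, I would show that in fact $g$ \emph{is} representable by $f$ at the point $\mathfrak{a}=h(D)$, where $D=\{x\in A\mid g(x)=1\}$. Indeed, for any $x\in A$,
\[
f(x,\mathfrak{a})=1 \ \Longleftrightarrow\ \exists S\colon h(S)=h(D)\wedge x\in S \ \Longleftrightarrow\ x\in D \ \Longleftrightarrow\ g(x)=1,
\]
where the middle equivalence uses injectivity of $h$ to conclude $S=D$. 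So $g(-)=f(-,\mathfrak{a})$, contradicting non-representability, and hence $h$ cannot be injective.

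The substantive step is the representability calculation, and the only place where injectivity is really used is the implication ``$h(S)=h(D)\Rightarrow S=D$''. Everything else is a routine application of the schema of Theorem~\ref{thm:cantor}. Conceptually, the set $D$ plays the role of the anti-diagonal set $D_F$ in the usual Cantor proof, but now packaged so that a supposed injection $h$ delivers a canonical ``name'' $h(D)$ at which $g$ would have to be represented, in direct parallel to the number $\mathfrak{p}$ in the proof of Theorem~\ref{euclid}.
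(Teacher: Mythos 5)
Your proof is correct, but it instantiates the schema differently from the paper. The paper takes $B=\mathscr{P}(A)$ and diagonalizes over \emph{subsets}: it defines $f(X,Y)=1$ iff $h(X)\not\in Y$, lets $g(X)={\tt neg}(f(X,X))$, and shows that injectivity forces $g$ to be representable at the explicitly defined set $\mathcal{D}_h=\{a\in A\mid\exists Y\,(h(Y)=a\ \&\ a\not\in Y)\}$. You instead take $B=A$ and diagonalize over \emph{elements}, with the representing index being the point $h(D)\in A$; your $f$ and $D$ are essentially the transpose (and complement) of the paper's data. Both are legitimate fittings of Boolos's argument into the Cantor diagram, and your representability computation at $h(D)$ is sound --- the only real use of injectivity is, as you say, the step $h(S)=h(D)\Rightarrow S=D$. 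What the paper's choice buys is that the implication $h(X)\not\in X\rightarrow h(X)\in\mathcal{D}_h$ holds \emph{without} any injectivity hypothesis, which is then harvested in Corollary~\ref{cor:boolos} to produce an explicit $\mathcal{D}_h$ and some $\mathcal{C}_h$ with $h(\mathcal{C}_h)=h(\mathcal{D}_h)\in\mathcal{D}_h\setminus\mathcal{C}_h$; your pure reductio yields only the bare non-injectivity statement. One small inaccuracy: your remark that the definition of $f$ ``would be ambiguous'' without injectivity is not right --- the existential clause defines a function on $A\times A$ unconditionally; injectivity is needed only to identify $f(x,y)$ with $[x\in h^{-1}(y)]$ and, later, to pin down $S=D$ in the representability step. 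This does not affect the validity of the argument.
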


\begin{proof}
Let $h:\mathscr{P}(A)\rightarrow A$ be a function. Define
$f:\mathscr{P}(A)\times\mathscr{P}(A)\rightarrow\mathbf{2}$ by
$$f(X,Y)=\left\{\begin{array}{l} 1 \quad  \textrm{if}\ \ h(X)\not\in Y\\
\\ 0  \quad \textrm{if}\ \ h(X)\in Y \\ \end{array} \right.$$
 and let
$g:\mathscr{P}(A)\rightarrow\mathbf{2}$ be the following function
\begin{diagram}
\mathscr{P}(A)\times\mathscr{P}(A) & \rTo^f & \mathbf{2} \\
\uTo^{\triangle_{\mathscr{P}(A)}} & & \dTo_{\tt neg}\\
\mathscr{P}(A) & \rTo_g & \mathbf{2}\\
\end{diagram}
Let $\mathcal{D}_h=\{a\in A\mid \exists Y\subseteq A \ (h(Y)=a \ \&
\ a\not\in Y)\}$. Note that for any $X\subseteq A$ we have
$h(X)\not\in X\longrightarrow h(X)\in\mathcal{D}_h$. We show that if
$h$ is one--to--one then $g$ is representable by $f$ at
$\mathcal{D}_h$. For, if $h$ is injective then for any $X\subseteq
A$,

\begin{tabular}{rcl}
$h(X)\!\in\!\mathcal{D}_h$ & $\longrightarrow$ & $\exists
Y\!\subseteq\!A\;
(h(Y)\!=\!h(X)\ \& \ h(X)\not\in Y)$ \\
  & $\longrightarrow$ & $\exists Y\; (Y\!=\!X \ \& \ h(X)\not\in Y)$ \\
  & $\longrightarrow$ & $h(X)\not\in X$ \\
\end{tabular}

\noindent  Whence, $h(X)\not\in X\longleftrightarrow
h(X)\in\mathcal{D}_h$ for all $X\subseteq A$. So, for any
$X\subseteq A$,

\begin{tabular}{rcl}
$f(X,\mathcal{D}_h)=0$ & $\longleftrightarrow$ & $h(X)\in\mathcal{D}_h$ \\
  & $\longleftrightarrow$ & $h(X)\not\in X$ \\
  & $\longleftrightarrow$ & $f(X,X)=1$ \\
  & $\longleftrightarrow$ & $g(X)={\tt neg}(f(X,X))=0$ \\
\end{tabular}

\noindent
 Thus, $g(X)=f(X,\mathcal{D}_h)$. The contradiction
(that ${\tt neg}$ possesses a fixed--point) follows as before,
implying that the function $h$ cannot be injective.
\end{proof}

In fact the proof of the above theorem  gives some more information
than mere non--injectivity of any function
$h:\mathscr{P}(A)\rightarrow A$, i.e., the existence of some
$C,D\subseteq A$ such that $h(C)=h(D)$ and $C\neq D$.

\begin{corollary}\label{cor:boolos}
For any function $h:\mathscr{P}(A)\rightarrow A$ there are some
$C,D\subseteq A$ such that $h(C)=h(D)\in D\setminus C$ (and so
$C\neq D$).
\end{corollary}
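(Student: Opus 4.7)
The plan is to read off the explicit witnesses $C$ and $D$ directly from the diagonal set $\mathcal{D}_h=\{a\in A\mid\exists Y\subseteq A\;(h(Y)=a\ \&\ a\notin Y)\}$ introduced in the preceding theorem, without having to reopen the fixed--point diagram. My first step is to set $D=\mathcal{D}_h$ and to let $a=h(D)$, then to show that $a$ actually lies in $D$. This is a small standalone observation: if one assumes $a\notin\mathcal{D}_h$, then the set $Y=\mathcal{D}_h$ itself satisfies the clause $h(Y)=a$ and $a\notin Y$, which by the very definition of $\mathcal{D}_h$ places $a$ into $\mathcal{D}_h$---contradicting the assumption. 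Hence $a\in\mathcal{D}_h=D$.

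The second step exploits this membership. Because $a\in\mathcal{D}_h$, the defining property of $\mathcal{D}_h$ immediately produces some $C\subseteq A$ with $h(C)=a$ and $a\notin C$. Taking this $C$ together with $D=\mathcal{D}_h$ one then reads off $h(C)=h(D)=a$ with $a\in D\setminus C$, which is exactly the conclusion sought; in particular $C\neq D$ since $a$ separates them. The construction reuses the very same $\mathcal{D}_h$ that underlies the diagonal diagram in the preceding proof, so no new ingredients are required.

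I do not expect any serious obstacle. The one piece of bookkeeping worth double--checking is the orientation: the set $\mathcal{D}_h$ must be placed on the $D$--side (so that the common value $a$ lies in it) and the witness set from the definition must be placed on the $C$--side (so that $a$ is absent), rather than the reverse. Once this is observed the corollary reduces to two lines, and indeed the preceding theorem's proof tacitly contained both facts already, since the contradiction there was extracted from precisely the pair $(\mathcal{D}_h, Y)$ with $h(Y)=h(\mathcal{D}_h)$ and differing membership of this common value.
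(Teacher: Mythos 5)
Your proposal is correct and is essentially the paper's own proof: the paper likewise applies the implication $h(X)\not\in X\rightarrow h(X)\in\mathcal{D}_h$ to $X=\mathcal{D}_h$ to conclude $h(\mathcal{D}_h)\in\mathcal{D}_h$, and then extracts the witness $\mathcal{C}_h$ with $h(\mathcal{C}_h)=h(\mathcal{D}_h)\not\in\mathcal{C}_h$ directly from the definition of $\mathcal{D}_h$. Your orientation of the pair ($\mathcal{D}_h$ on the $D$--side, the witness on the $C$--side) matches the paper exactly.
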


\begin{proof}
For any $X\subseteq A$ we had $h(X)\not\in X\longrightarrow
h(X)\in\mathcal{D}_h$, whence $h(\mathcal{D}_h)\not\in
\mathcal{D}_h\longrightarrow h(\mathcal{D}_h)\in\mathcal{D}_h$, and
so $h(\mathcal{D}_h)\in\mathcal{D}_h$. Thus, there exists some
$\mathcal{C}_h$ such that $h(\mathcal{C}_h)=h(\mathcal{D}_h)$ and
$h(\mathcal{D}_h)\not\in\mathcal{C}_h$. So, for these
$\mathcal{C}_h,\mathcal{D}_h\subseteq A$ we have
$h(\mathcal{C}_h)=h(\mathcal{D}_h)\in\mathcal{D}_h\setminus\mathcal{C}_h$.
\end{proof}

Boolos notes in \cite{boolos} that, in the above proof, though the
set $\mathcal{D}_h$ had an explicit definition:
\newline\centerline{$\mathcal{D}_h=\{a\in A\mid \exists Y\subseteq A \ (h(Y)=a \ \& \
a\not\in Y)\}$,} the set $\mathcal{C}_h$ was not defined explicitly,
and its mere existence was shown. So, this proof of non--injectivity
was not constructive (did not explicitly construct two sets $C$ and
$D$ such that $h(C)=h(D)$ and $C\neq D$). For  a constructive
proof, Boolos \cite{boolos} proceeds as follows (cf.
\cite{gch,yet}).

Fix a function $h:\mathscr{P}(A)\rightarrow A$. Call a subset
$B\subseteq A$ an {\em $h$--woset} ($h$ well ordered set) when there
exists a well ordering  $\prec$ on $B$ such that $b=h(\{x\in B\mid
x\prec b\})$ for any $b\in B$.
 For example, $\{h(\emptyset)\}$ is an $h$--woset, and indeed any
 non--empty $h$--woset must contain $h(\emptyset)$. Some other
 examples of $h$--wosets are 

 $\{h(\emptyset), h\big(\big{\{}h(\emptyset)\big{\}}\big)\}$ and
  $\{h(\emptyset), h\big(\big{\{}h(\emptyset)\big{\}}\big),
  h\Big(\Big{\{}h(\emptyset), h\big(\big{\{}h(\emptyset)\big{\}}\big)
 \Big{\}}\Big)\}$, etc.

\noindent We need the following two facts about the $h$--wosets:

\begin{itemize}\parskip=-5pt%\itemindent=-10pt
\item[(1)] If $B$ and $C$ are two $h$--wosets with the well
ordering relations $\prec_B$ and $\prec_C$ then exactly one (and only one) of the following holds:

\begin{itemize}\parskip=-2pt%\itemindent=-10pt
\item[(i)] $(B,\prec_B)$ is an initial segment of $(C,\prec_C)$, or

\item[(ii)] $(C,\prec_C)$ is an initial segment of $(B,\prec_B)$, or

\item[(iii)] $(B,\prec_B)=(C,\prec_C)$.
\end{itemize}

 \item[(2)] For any $h$--woset $B$, if $h(B)\not\in B$ then the
set $\Phi(B)=B\cup\{h(B)\}$ is an $h$--woset, and $B$ is an initial
segment of $\Phi(B)$.
\end{itemize}

The statement (1) corresponds to Zermelo's theorem that any two well
ordered sets are comparable to each other: either they are
isomorphic or one of them is isomorphic to an initial segment of the
other one. It follows from (1) that the union of all $h$--wosets is
an $h$--woset, denoted by $\mathcal{W}_h$; thus $\mathcal{W}_h$ is
the greatest $h$--woset. For (2) let $B$ be an $h$--woset with the
well ordering $\prec_B$ such that $h(B)\not\in B$. Then $\Phi(B)$ is
an $h$--woset with the well ordering $\prec_{\Phi(B)}=\prec_B\cup\
(B\times\{h(B)\})$.

The proof of Boolos \cite{boolos} continues as follows (see also
\cite{gch}): since $\Phi(\mathcal{W}_h)=\mathcal{W}_h$ then
$h(\mathcal{W}_h)\in \mathcal{W}_h$. Also for $\mathcal{V}_h=\{x\in
\mathcal{W}_h\mid x\prec_{\mathcal{W}_h}h(\mathcal{W}_h)\}$ we have
$h(\mathcal{W}_h)=h(\mathcal{V}_h)$ and
$\mathcal{W}_h\neq\mathcal{V}_h$ because $h(\mathcal{W}_h)\not\in
\mathcal{V}_h$. Indeed, the result is stronger than this (and
Corollary~\ref{cor:boolos}) since the sets $\mathcal{W}_h$ and
$\mathcal{V}_h$ were explicitly defined in a way that
$\mathcal{V}_h\subsetneqq\mathcal{W}_h$ holds and
$h(\mathcal{V}_h)=h(\mathcal{W}_h)\in\mathcal{W}_h\setminus\mathcal{V}_h$.
 As another partial surprise we show that this proof is also
 diagonal and fits in our universal framework.

\begin{theorem}[Boolos]\label{thm:boolos}
For any set $A$ and function $h:\mathscr{P}(A)\rightarrow A$ there
exist explicitly definable subsets $V,W\subseteq A$ such that
$V\subsetneqq W$ and $h(V)=h(W)\in W\setminus V$.
\end{theorem}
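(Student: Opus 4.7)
My plan is to take $W=\mathcal{W}_h$ (the union of all $h$--wosets in $A$) and $V=\mathcal{V}_h$ (the initial segment of $\mathcal{W}_h$ lying strictly below $h(\mathcal{W}_h)$) as the two explicitly definable subsets, and then verify the three requirements $V\subsetneqq W$, $h(V)=h(W)$, and $h(W)\in W\setminus V$.

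First I would invoke fact~(1) from the excerpt to see that $\mathcal{W}_h$ is itself an $h$--woset, equipped with a canonical well--ordering $\prec_{\mathcal{W}_h}$ obtained by amalgamating the well--orderings on its constituent $h$--wosets; this makes $\mathcal{W}_h$ the greatest $h$--woset. Next I would argue by maximality that $h(\mathcal{W}_h)\in\mathcal{W}_h$: otherwise fact~(2) would produce the strictly larger $h$--woset $\Phi(\mathcal{W}_h)=\mathcal{W}_h\cup\{h(\mathcal{W}_h)\}$, a contradiction. The set $\mathcal{V}_h=\{x\in\mathcal{W}_h\mid x\prec_{\mathcal{W}_h}h(\mathcal{W}_h)\}$ is then a proper initial segment of $\mathcal{W}_h$ (hence itself an $h$--woset), and the defining clause of the $h$--woset $\mathcal{W}_h$ applied at its own element $h(\mathcal{W}_h)$ immediately yields $h(\mathcal{V}_h)=h(\mathcal{W}_h)$. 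Irreflexivity of $\prec_{\mathcal{W}_h}$ forces $h(\mathcal{W}_h)\not\in\mathcal{V}_h$, closing the verification.

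To display this as an instance of the universal diagonal schema I would take $B$ to be the family of $h$--wosets, $D=\mathbf{2}$, $\alpha={\tt neg}$, and $f(X,Y)=1$ iff $h(X)\in Y$ (and $0$ otherwise). The diagonal function $g={\tt neg}\circ f\circ\triangle_B$ then satisfies $g(X)=1$ iff $h(X)\not\in X$. For every proper $h$--woset $X\subsetneqq\mathcal{W}_h$ (necessarily an initial segment of $\mathcal{W}_h$ by fact~(1)) both $g(X)$ and $f(X,\mathcal{W}_h)$ record $h(X)\in\mathcal{W}_h\setminus X$, so $g$ is representable by $f$ at $\mathcal{W}_h$ everywhere off the single diagonal point $X=\mathcal{W}_h$; at that point, the equality demanded by representability would read $f(\mathcal{W}_h,\mathcal{W}_h)={\tt neg}(f(\mathcal{W}_h,\mathcal{W}_h))$, a forbidden fixed--point of ${\tt neg}$. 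The split between $X=\mathcal{W}_h$ (where the woset property forces $h(\mathcal{W}_h)\in\mathcal{W}_h$) and $X=\mathcal{V}_h$ (where the predecessor--set property gives $h(\mathcal{V}_h)=h(\mathcal{W}_h)$) is precisely the diagonal resolution of this clash.

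The main obstacle is conceptual rather than computational: facts~(1) and~(2) are essentially Zermelo's comparability of well--orderings plus a one--step extension argument, and the chase through $h$--wosets is routine. What needs care is the sense in which the diagonal schema is being applied here: unlike its earlier uses where a universal failure of representability produces a non--existence theorem, here the schema is deployed to pinpoint the unique diagonal point where representability must break, and the $h$--woset structure is what converts that breakage into the explicit pair $(V,W)$ demanded by the theorem.
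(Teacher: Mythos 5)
Your proof of the statement is correct, but it reaches the crucial step $h(\mathcal{W}_h)\in\mathcal{W}_h$ by a genuinely different route from the paper. You argue directly by maximality: if $h(\mathcal{W}_h)\not\in\mathcal{W}_h$, then fact~(2) gives a strictly larger $h$--woset $\Phi(\mathcal{W}_h)$, contradicting that $\mathcal{W}_h$ is the greatest one. That is Boolos's original argument, which the paper recalls \emph{before} stating Theorem~\ref{thm:boolos}; the whole point of the paper's proof is to re-derive exactly that step through the fixed--point schema. To do so the paper uses a different $f$, namely $f(X,Y)=1$ iff $\Phi(X)\sqsubseteq Y$ (initial--segment comparison), so that $f(X,\mathcal{W}_h)=1$ for all $X$, while the hypothesis ``no $Z$ satisfies $h(Z)\in Z$'' would force $f(X,X)=0$ for every $X$ and hence make $g={\tt neg}\circ f\circ\triangle_{\mathscr{W}_h}$ representable at $\mathcal{W}_h$; the forbidden fixed point of ${\tt neg}$ then \emph{proves} the existence of some $Z$ with $\Phi(Z)=Z$, from which $Z=\mathcal{W}_h$ and $h(\mathcal{W}_h)\in\mathcal{W}_h$ follow. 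Your diagonal display instead takes $f(X,Y)=1$ iff $h(X)\in Y$ and only \emph{locates} the inevitable failure of representability at the single point $X=\mathcal{W}_h$; it does not by itself yield $h(\mathcal{W}_h)\in\mathcal{W}_h$ (you still invoke the direct maximality argument for that), so in your write-up the schema is illustrative rather than load-bearing. What each approach buys: yours is shorter and closer to Boolos's own text; the paper's version buys the thesis of the section, namely that the constructive proof is itself an instance of the universal diagonal/fixed--point schema.
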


\begin{proof}
Let $\mathbf{W}_h$ be the class of all $h$--wosets; i.e., all
subsets $B\subseteq A$ on which there exists a (unique) well
ordering $\prec_B$ such that $b=h(\{x\in B\mid x\prec_B b\})$ for
all $b\in B$. Define $\Phi:\mathbf{W}_h\rightarrow\mathbf{W}_h$ by
$$\Phi(X)=\left\{\begin{array}{ll} X\cup\{h(X)\} & \quad \textrm{if}\ \  h(X)\not\in X
\\ X  & \quad \textrm{if}\ \ h(X)\in X\\
\end{array} \right.$$
 $$\textrm{with}\quad\  \prec_{\Phi(X)}=\left\{\begin{array}{ll} \prec_X\cup\ (X\times\{h(X)\}) & \quad \textrm{if}\ \  h(X)\not\in X
\\ \prec_X  & \quad \textrm{if}\ \ h(X)\in X\\
\end{array} \right.$$ Define the function
$f:\mathbf{W}_h\times\mathbf{W}_h\rightarrow\mathbf{2}$    by
$$f(X,Y)=\left\{\begin{array}{lll} 1  &   \ \ \textrm{if}\   \Phi(X) \ \textrm{is isomorphic
to}\ Y\ \textrm{or an initial segment of it} & \big(\Phi(X)\sqsubseteq Y\big)
\\ 0  &   \ \ \textrm{if}\  Y \ \textrm{is isomorphic to an initial segment of}\ \Phi(X) &  \big(Y\sqsubset\Phi(X)\big)\\
\end{array} \right.$$
Let
$\mathcal{W}_h$ be the greatest element of $\mathbf{W}_h$ (as
above). Then $f(X,\mathcal{W}_h)=1$ for all $X\in\mathbf{W}_h$. We
claim that
\newline\centerline{$(\ast)$\quad there exists some $Z\in\mathbf{W}_h$ such that $h(Z)\in
Z$ or equivalently $\Phi(Z)=Z$.} Assume (for a moment) that the
claim is false. Then for all $X\in\mathbf{W}_h$, $X$ is (isomorphic
to) an initial segment of $\Phi(X)$; whence $f(X,X)=0$. Let
$g:\mathbf{W}_h\rightarrow\mathbf{2}$ be   as
\begin{diagram}
\mathbf{W}_h\times\mathbf{W}_h & \rTo^f & \mathbf{2} \\
\uTo^{\triangle_{\mathbf{W}_h}} & & \dTo_{\tt neg}\\
\mathbf{W}_h & \rTo_g & \mathbf{2}\\
\end{diagram}
It follows from assuming the falsity of the claim $(\ast)$ that  $$g(X)={\tt
neg}(f(X,X))=1=f(X,\mathcal{W}_h).$$ Thus $g$ is representable by $f$
(at $\mathcal{W}_h$) and the usual contradiction (the existence of a
fixed--point for ${\tt neg}$) follows. So, the claim $(\ast)$ is
true, and there exists some $Z\in\mathbf{W}_h$ such that $h(Z)\in
Z$ or equivalently $\Phi(Z)=Z$. It can be seen that then
$\mathcal{W}_h=Z$, so  $\Phi(\mathcal{W}_h)=\mathcal{W}_h$ and
$h(\mathcal{W}_h)\in\mathcal{W}_h$. Whence, as above, for the subset
$\mathcal{V}_h=\{x\in \mathcal{W}_h\mid
x\prec_{\mathcal{W}_h}h(\mathcal{W}_h)\} \ (\subseteq A)$ we will
have $\mathcal{V}_h\subsetneqq\mathcal{W}_h$ and
$h(\mathcal{V}_h)=h(\mathcal{W}_h)\in\mathcal{W}_h\setminus\mathcal{V}_h$.
Note that both $\mathcal{W}_h$ and $\mathcal{V}_h$ were defined
explicitly.
\end{proof}

Let us reiterate what was proved:

\bigskip

\noindent (Corollary~\ref{cor:boolos}) For any function
$h:\mathscr{P}(A)\rightarrow A$ a subset $\mathcal{D}_h\subseteq A$
was explicitly defined in such a way that there exists some
$\mathcal{C}_h\subseteq A$ (without an explicit definition) such
that $\mathcal{C}_h\neq\mathcal{D}_h$ and
$h(\mathcal{C}_h)=h(\mathcal{D}_h)\in\mathcal{D}_h\setminus\mathcal{C}_h$.

\bigskip

\noindent (Theorem~\ref{thm:boolos}) For any function
$h:\mathscr{P}(A)\rightarrow A$ two subset $\mathcal{V}_h\subseteq
A$ and $\mathcal{W}_h\subseteq A$ were explicitly defined in such a
way that $\mathcal{V}_h\subsetneqq\mathcal{W}_h$ and
$h(\mathcal{V}_h)=h(\mathcal{W}_h)\in\mathcal{W}_h\setminus\mathcal{V}_h$.

\section{Yablo's Paradox}
To counter a general belief that all the paradoxes stem from a kind
of circularity (or involve some self--reference, or use a diagonal
argument) Stephen Yablo designed a paradox in 1985 that seemingly
avoided self--reference (\cite{yab1,yab2}).
Let us  fix  our reading of Yablo's
Paradox. Consider the sequence of sentences
$\{\mathcal{Y}_n\}_{n\in\mathbb{N}}$ such that for each
$n\in\mathbb{N}$:
\newline\centerline{$\mathcal{Y}_n \textrm{ is true } \Longleftrightarrow
\forall k>n\ (\mathcal{Y}_k$ is untrue).}

\noindent The paradox follows from
the following deductions. For each $n\in\mathbb{N}$,

\begin{tabular}{ccl}
\qquad $\mathcal{Y}_n$ is true & $\Longrightarrow$ &  $\forall k>n\ (\mathcal{Y}_k$ is untrue)\\
\qquad  & $\Longrightarrow$ & $(\mathcal{Y}_{n+1}$ is untrue)  and
 $\forall k>n+1\ (\mathcal{Y}_k$ is untrue) \\
\qquad  & $\Longrightarrow$ & $(\mathcal{Y}_{n+1}$ is untrue)  and
 $(\mathcal{Y}_{n+1}$ is true),  \\
 \qquad  & $\Longrightarrow$ & Contradiction! \\
\end{tabular}

\noindent Thus $\mathcal{Y}_n$ is not true. So,
\newline\centerline{$\forall k\ (\mathcal{Y}_k$ is untrue),}

\noindent and in particular \newline\centerline{$\forall k>0\ (\mathcal{Y}_k$
is untrue),}

\noindent and so $\mathcal{Y}_0$ must be true (and untrue at
the same time); contradiction!

\subsection{Propositional Linear Temporal Logic}
The propositional
linear temporal logic (LTL) is a logical formalism that can refer to
time; in LTL  one can encode formulae about the future, e.g., a
condition will eventually be true, a condition will be true until
another fact becomes true, etc. LTL was first proposed for the
formal verification of computer programs in 1977 by Amir Pnueli~\cite{pnueli}. For a modern introduction to LTL and its syntax and
semantics see e.g. \cite{temporal}. Two modality operators in LTL
that we will use are the ``next" modality $\Circle$ and the ``always"
modality $\Box$.  The formula $\Circle\varphi$ holds (in the
current moment) when $\varphi$ is true in the ``next step", and the
formula $\Box\varphi$ is true (in the current moment) when
$\varphi$ is true ``now and forever" (``always in the future"). In
the other words, $\Box$ is the reflexive and transitive closure
of $\Circle$.
It can be seen that  the formula $\Circle\neg\varphi\longleftrightarrow\neg\Circle\varphi$ is always true (is a law of LTL, see T1 on page 27 of \cite{temporal}), since $\varphi$ is untrue in the next step if and only if it is not the case that ``$\varphi$ is true in the next step". Also the formula $\Circle\Box\psi$ is true when
$\psi$ is true from the next step onward, that is $\psi$ holds in
the next step, and the step after that, and the step after that, etc.
The same holds for $\Box\Circle\psi$; indeed the formula
$\Circle\Box\psi\longleftrightarrow\Box\Circle\psi$ is a
law of LTL (T12 on page 28  of \cite{temporal}).
Whence, we have the equivalences $\Circle\Box\neg\varphi\longleftrightarrow\Box\Circle\neg\varphi\longleftrightarrow\Box\neg\Circle\varphi$
in LTL.

 \noindent The intended models (semantics) of LTL are systems $\langle\mathbb{N},\Vdash\rangle$ where $\Vdash\ \subseteq \mathbb{N}\!\times\!{\tt Atoms}$ is an arbitrary relation which can be extended to all formulas as follows:

\medskip
 \begin{tabular}{lll}
 $n\Vdash \varphi\wedge\psi$ & if and only if & $n\Vdash\varphi$ and $n\Vdash\psi$, \\
 $n\Vdash\neg\varphi$ & if and only if & $n\not\Vdash\varphi$, \\
 $n\Vdash\Circle\varphi$ & if and only if & $(n+1)\Vdash\varphi$, \\
 $n\Vdash\Box\varphi$ & if and only if & $m\Vdash\varphi$ for every $m\geq n$. \\
 \end{tabular}
\medskip

\noindent A formula $\tau$ is called valid (an LTL tautology) when for any model $\langle\mathbb{N},\Vdash\rangle$ and any $n\in\mathbb{N}$ we have $n\Vdash\tau$. Here we can readily check the validity of the formula $\Circle\neg\varphi\longleftrightarrow\neg\Circle\varphi$ as follows:
$$n\Vdash\Circle\neg\varphi\Longleftrightarrow(n+1)\Vdash\neg\varphi\Longleftrightarrow
(n+1)\not\Vdash\varphi\Longleftrightarrow n\not\Vdash\Circle\varphi \Longleftrightarrow n\Vdash\neg\Circle\varphi.$$
Also the validity of $\Circle\Box\psi\longleftrightarrow\Box\Circle\psi$ can be readily checked:

\begin{tabular}{lll}
$n\Vdash\Circle\Box\varphi$ & $\Longleftrightarrow$ & $(n+1)\Vdash\Box\varphi$ \\
 & $\Longleftrightarrow$ & $\forall k\geq n+1\big(k\Vdash\varphi\big)$ \\
 & $\Longleftrightarrow$ & $\forall k\geq n \big[(k+1)\Vdash\varphi\big]$ \\
 & $\Longleftrightarrow$ & $\forall k\geq n\big(k\Vdash\Circle\varphi\big)$ \\
 &  $\Longleftrightarrow$ & $n\Vdash\Box\Circle\varphi$. \\
\end{tabular}

\noindent Now we show the non--existence of a formula
 $\mathscr{Y}$ that satisfies the equivalence
$$\mathscr{Y}\!\longleftrightarrow\!\Circle\Box\neg
\mathscr{Y} \ \ \big(\!\!\longleftrightarrow\!\Box\Circle\neg\mathscr{Y}
\longleftrightarrow\Box\neg\Circle\mathscr{Y}\big);$$
 in  other words $\mathscr{Y}$ is a fixed--point of the
operator $x\mapsto\Circle\Box\neg x\ \big(\!\!\equiv\Box\Circle\neg x\equiv\Box\neg\Circle x\big)$. Following \cite{yan} we can
demonstrate this by the following diagram
\begin{diagram}
{\sf LTL}\times {\sf LTL} & \rTo^f & \mathbf{2} \\
\uTo^{\triangle_{\sf LTL}} & & \dTo_{\tt neg}\\
{\sf LTL} & \rTo_g & \mathbf{2}\\
\end{diagram}
where {\sf LTL} is the set of sentences in the language of LTL and
$f$ is defined by
$$f(X,Y)=\left\{\begin{array}{l} 1 \quad \textrm{if}\ \  X\not\equiv\Circle\Box\neg Y
\\ 0  \quad \textrm{if}\ \ X\equiv\Circle\Box\neg Y\\
\end{array} \right.$$
Here, $g$
is the characteristic function of all the Yablo--like sentences, the
sentences which claim that all they say in the future (from the next
step onward) is untrue.

\begin{theorem}\label{ltlyablo}
For any   $\varphi$, the formula $\big(\varphi\leftrightarrow\Circle\Box\neg\varphi\big)$ is not provable in LTL.
\end{theorem}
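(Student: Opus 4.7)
The plan is to reduce the unprovability claim to a semantic impossibility via soundness of {\sf LTL} with respect to the intended model $\langle\mathbb{N},\Vdash\rangle$ already set up in the paper, and then to rerun Yablo's paradoxical reasoning as a genuine derivation in that model. If $\varphi\leftrightarrow\odot\boxdot\neg\varphi$ were provable for some $\varphi$, then by soundness the biconditional would hold at every $n\in\mathbb{N}$ in every $\Vdash$-model; equivalently, using the semantic clauses for $\odot$ and $\boxdot$ recalled just before the theorem, we would have $n\Vdash\varphi\Longleftrightarrow\bigl(\forall k>n\ \ k\not\Vdash\varphi\bigr)$ for every $n$. It therefore suffices to contradict this universal assertion in a single model.

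So, fix any hypothetical $\varphi$ satisfying this equivalence pointwise. I would first rule out the ``true'' case for every $n$ by Yablo's move: if $n\Vdash\varphi$ for some $n$, then $k\not\Vdash\varphi$ for every $k>n$, and in particular $(n+1)\not\Vdash\varphi$. But from $k\not\Vdash\varphi$ for all $k>n+1$ (these $k$ being also $>n$), the semantic clause gives $(n+1)\Vdash\odot\boxdot\neg\varphi$, and the assumed biconditional at $n+1$ then forces $(n+1)\Vdash\varphi$, contradicting the previous line. Hence $n\not\Vdash\varphi$ for every $n$. Specialising to $n=0$ yields $\forall k>0\ \ k\not\Vdash\varphi$, so $0\Vdash\odot\boxdot\neg\varphi$, and the biconditional at $0$ gives $0\Vdash\varphi$, contradicting what was just established.

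In terms of the diagonal framework of the paper, this Yablo derivation is exactly the diagonal step in the displayed diagram: a would-be fixed--point $\mathscr{Y}$ of the operator $x\mapsto\odot\boxdot\neg x$ would make $g(\mathscr{Y})=1$, and the shift from moment $n$ to moment $n+1$ (through the ``next'' modality) is what manufactures the forbidden fixed--point of ${\tt neg}\colon\mathbf{2}\rightarrow\mathbf{2}$ predicted by Theorem~\ref{thm:cantor}. The main obstacle is keeping the Yablo bookkeeping precise: one must really use both $\odot\neg\varphi\leftrightarrow\neg\odot\varphi$ and $\odot\boxdot\psi\leftrightarrow\boxdot\odot\psi$, recorded earlier in the paper, in order to shift the ``always in the future'' quantifier from $n$ to $n+1$ via the inclusion $\{k:k>n+1\}\subsetneq\{k:k>n\}$ without losing any instance. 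Once this is done cleanly, the two contradictions---one ruling out $n\Vdash\varphi$ for any $n$, the other forcing $0\Vdash\varphi$---close the argument.
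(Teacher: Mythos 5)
Your proposal is correct and follows essentially the same route as the paper's own proof: reduce provability to validity in the intended model $\langle\mathbb{N},\Vdash\rangle$, rule out $n\Vdash\varphi$ for any $n$ by Yablo's shift from $n$ to $n+1$, and then derive $0\Vdash\varphi$ from the resulting universal falsity to close the contradiction. The only cosmetic difference is that the paper phrases the second step by pushing $\neg\odot\boxdot\neg\psi$ through every $k$ rather than specialising to $n=0$, but the content is identical.
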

\begin{proof}
If LTL proves  $\psi\leftrightarrow\Circle\Box\neg\psi$ for some (propositional)  formula $\psi$, then for a model $\langle\mathbb{N},\Vdash\rangle$:
\begin{itemize}\parskip=-2pt%\itemindent=-10pt
\item[(i)] If $m\Vdash\psi$ for some $m$, then $m\Vdash\Circle\Box\neg\psi$ so $(m+1)\Vdash\Box\neg\psi$, hence $(m+i)\Vdash\neg\psi$ for all $i\geq 1$.
        In particular, $(m+1)\Vdash\neg\psi$ and $(m+j)\Vdash\neg\psi$ for all $j\geq 2$ which implies $(m+2)\Vdash\Box\neg\psi$ or $(m+1)\Vdash\Circle\Box\neg\psi$ so $(m+1)\Vdash\psi$, a contradiction!
\item[(ii)] So for all $k$ we have  $k\Vdash\neg\psi$  or equivalently $k\Vdash\neg\Circle\Box\neg\psi$ or $k\Vdash\Circle\neg\Box\neg\psi$, thus $(k+1)\Vdash\neg\Box\neg\psi$; hence $(k+n)\Vdash\psi$ for some $n\geq 1$, contradicting  (i)!
\end{itemize}
So, LTL$\not\vdash\big(\varphi\leftrightarrow\Circle\Box\neg\varphi\big)$ for all formulas $\varphi$.
\end{proof}

The above proof is very similar to Yablo's argument (in his paradox) presented at the beginning of this section, and this goes to say that Yablo's paradox has turned into a genuine mathematico--logical theorem (in LTL) for the first time in Theorem~\ref{ltlyablo}\footnote{Note that Yablo's paradox has already been used to give new proofs of some old theorems e.g. in \cite{cieslinski2} (for G\"odel's theorem) or in \cite{leach} (for Rosser's Theorem); but no new theorem had come out of Yablo's paradox.}.

\subsection{Priest's Inclosure Schema}
In 1997 Priest \cite{priest}   has shown the existence of a formula $Y(x)$
which satisfies
$Y(n)\leftrightarrow\forall
k\!>\!n\ \neg\mathcal{T}(\ulcorner Y(k)\urcorner)$ for every
$n\in\mathbb{N}$, where $\mathcal{T}(x)$ is a (supposedly truth)
predicate; here $\ulcorner\psi\urcorner$ is the (G\"odel) code of
the formula $\psi$.
% and for a $k\in\mathbb{N}$, $\underline{k}$ is a
%term representing
% $k$ (e.g. ${1+\cdots +1}\ [{k - \textrm{times}}]$).
% Rigorous
% proofs for the existence of such a formula $Y(x)$ (and its
% construction) can be found in \cite{cieslinski1,cieslinski2}.
 Here
 we construct a formula $Y(x)$ which, for every $n\in\mathbb{N}$, satisfies the formula
$Y(n)\leftrightarrow\forall k\!>\!n\ \Psi(\ulcorner
Y({k})\urcorner)$ for some $\Pi_1$ formula $\Psi$, by
using the Recursion Theorem (of Kleene); for recursion--theoretic
definitions and theorems see e.g. \cite{epstein}\footnote{Of course the mere existence of such a formula $Y(x)$ follows directly from G\"odel's Diagonal Lemma (\cite{epstein})}. Let $\mathbf{T}$
denote Kleene's T Predicate, and for a fixed $\Pi_1$ formula
$\Psi(x)$ let $r$ be the recursive function defined by $r(x,y)=\mu\,
z \big(z\!>\!x \ \&\ \neg\Psi(\ulcorner\neg\exists
u\mathbf{T}(y,{z},u)\urcorner)\big)$; note that $\neg\Psi$
is a $\Sigma_1$ formula. By the S--m--n theorem there exists a
primitive recursive function $s$ such that
$\varphi_{s(y)}(x)=r(x,y)$; here $\varphi_n$ denotes the unary
recursive function with (G\"odel) code $n$, so
$\varphi_0,\varphi_1,\varphi_2,\cdots$ lists all the unary recursive
functions. By Kleene's Recursion Theorem, there exists some (G\"odel
code) $e$ such that $\varphi_e=\varphi_{s(e)}$. Whence,
$$\varphi_{e}(x)=\varphi_{s(e)}(x)=r(x,e)=\mu\, z\big(z\!>\!x\ \& \
\neg\Psi(\ulcorner\neg\exists
u\mathbf{T}(e,{z},u)\urcorner)\big).$$ So, for any
$x\in\mathbb{N}$ we have $ \exists
u\mathbf{T}(e,{x},u)\Leftrightarrow
\varphi_e(x)\!\downarrow\ \Leftrightarrow  \exists z \big(z\!>\!x\
\& \ \neg\Psi(\ulcorner\neg\exists
u\mathbf{T}(e,{z},u)\urcorner)\big)$, or in the other
words  we have the equivalence $$\neg\exists
u\mathbf{T}(e,{x},u) \iff \forall z\!>\!x\
\Psi(\ulcorner\neg\exists
u\mathbf{T}(e,{z},u)\urcorner).$$ Thus if we let
$\mathcal{Y}(v) = \neg\exists z\mathbf{T}(e,\underline{v},z)$, then
for any $n\in\mathbb{N}$ we have
$$\mathcal{Y}(n)\leftrightarrow\forall k\!>\!n\ \Psi(\ulcorner
\mathcal{Y}({k})\urcorner).$$ Let us note that Yablo's
paradox occurs when $\Psi$ is taken to be an untruth (or
non-satisfaction) predicate; in fact one might be tempted to take
$\neg{\it Sat}_{\Pi,1}(x,\emptyset)$ (see Theorem~1.75 of \cite{hp})
as $\Psi(x)$; but by construction ${\it Sat}_{\Pi,1}(x,\emptyset)$ is $\Pi_1$ and so
$\neg{\it Sat}_{\Pi,1}(x,\emptyset)$ is $\Sigma_1$, and our proof
works for  $\Psi\in\Pi_1$ only (otherwise the function $r$ could not
be recursive). Actually, the above construction shows that the predicate ${\it
Sat}_{\Pi,1}(x,\emptyset)$ (in \cite{hp}) cannot be $\Sigma_1$,
which is equivalent to saying that the set of (arithmetical) true $\Pi_1$ sentences cannot
be recursively enumerable, and this is a consequence of G\"odel's
first incompleteness theorem\footnote{This line of reasoning also shows the non--existence of
a formula $\theta(x)$ (in arithmetical languages) which can satisfy the equivalence $\theta(x)\leftrightarrow\forall y\!>\!x\, \neg
\theta(y)$  in $\mathbb{N}$ or in a  theory containing Peano's Arithmetic.}.% (cf. \cite{cieslinski1,ciesl\inski2}).

In \cite{priest} Priest also introduced his Inclosure Schema and
showed that Yablo's paradox is amenable in it (see also \cite{bc2}). In the following,  we show that Priest's Inclosure
Schema can fit in Yanofsky's framework \cite{yan}.
With some inessential modification for better reading, Priest's
inclosure schema is defined to be a triple
$\langle\Omega,\Theta,\delta\rangle$ where
\begin{itemize}\parskip=-5pt%\itemindent=-10pt
\item $\Omega$ is a set of objects;
\item $\Theta\subseteq\mathscr{P}(\Omega)$ is a property of subsets
of $\Omega$ such that $\Omega\in\Theta$;
\item $\delta:\Theta\rightarrow\Omega$ is a function such that for each $X\in\Theta$,
$\delta(X)\not\in X$.
\end{itemize}
That any inclosure schema is contradictory can be seen from the fact
that by the second item  $\delta(\Omega)$ must be defined and belong
to $\Omega$, but at the same time by the third item
$\delta(\Omega)\not\in\Omega$. We show how this can be proved by the
non--existence of a fixed--point for the negation function.

\begin{theorem}
If an inclosure schema exists, then  ${\tt negation}$  has a
fixed--point.
\end{theorem}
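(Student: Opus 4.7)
The plan is to fit an arbitrary inclosure schema $\langle\Omega,\Theta,\delta\rangle$ into the universal diagonal diagram of Theorem~\ref{thm:cantor} with $B=\Theta$ and $D=\mathbf{2}$ (and $\alpha=\mathtt{neg}$), and then to show that the resulting diagonal function $g$ is representable by $f$; by the contrapositive of Theorem~\ref{thm:cantor} this immediately forces $\mathtt{neg}\colon\mathbf{2}\to\mathbf{2}$ to possess a fixed--point.

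First I would define $f\colon\Theta\times\Theta\to\mathbf{2}$ by
$$f(X,Y)=\begin{cases} 1 & \text{if } \delta(X)\in Y, \\ 0 & \text{if } \delta(X)\notin Y. \end{cases}$$
The third clause of an inclosure schema, $\delta(X)\notin X$ for every $X\in\Theta$, gives $f(X,X)=0$ uniformly. Hence the diagonal function
$$g(X)=\mathtt{neg}\bigl(f(X,X)\bigr)=1$$
built from the diagram
\begin{diagram}
\Theta\times\Theta & \rTo^{f} & \mathbf{2} \\
\uTo^{\triangle_{\Theta}} & & \dTo_{\mathtt{neg}} \\
\Theta & \rTo_{g} & \mathbf{2} \\
\end{diagram}
is identically $1$ on $\Theta$.

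Next I would show that $g$ is representable by $f$ at the point $\Omega\in\Theta$ (which is in $\Theta$ by the second clause of the schema). Indeed, since $\delta\colon\Theta\to\Omega$, we have $\delta(X)\in\Omega$ for every $X\in\Theta$, and therefore $f(X,\Omega)=1=g(X)$ for every $X\in\Theta$. So $g$ is representable by $f$ at $\Omega$, and by the reasoning of Theorem~\ref{thm:cantor} this forces $f(\Omega,\Omega)$ to be a fixed--point of $\mathtt{neg}$: unwinding the two ways of computing $g(\Omega)$ yields $f(\Omega,\Omega)=g(\Omega)=\mathtt{neg}\bigl(f(\Omega,\Omega)\bigr)$.

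No step is a serious obstacle; the only subtle choice is the orientation of $f$. The alternative $f(X,Y)=[\delta(Y)\in X]$ also makes $f(X,X)\equiv 0$, but representability would then require a single element $\delta(Y_0)$ lying in every $X\in\Theta$, which need not exist. Taking the indicator of $\delta(X)\in Y$ instead, the point $\Omega$ works uniformly because $\Omega$ is the codomain of $\delta$, and this is precisely where the inherent contradiction of the schema ($\delta(\Omega)\in\Omega$ versus $\delta(\Omega)\notin\Omega$) gets recast as a fixed--point of $\mathtt{neg}$.
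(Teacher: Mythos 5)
Your proposal is correct and coincides with the paper's own proof: the same indicator function $f(X,Y)=[\delta(X)\in Y]$, the same diagram over $\Theta$, and representability of $g$ at $\Omega$ via $\delta(X)\in\Omega$ versus $\delta(X)\notin X$. Your closing remark on the orientation of $f$ is a sensible observation but not needed beyond what the paper does.
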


\begin{proof}
Assume $\langle\Omega,\Theta,\delta\rangle$ is a
(hypothetical) inclosure schema. Put
$f:\Theta\times\Theta\rightarrow\mathbf{2}$ as
$$f(X,Y)=\left\{\begin{array}{l} 1 \quad \textrm{if}\ \  \delta(X)\in Y
\\ 0  \quad \textrm{if}\ \ \delta(X)\not\in Y\\
\end{array} \right.$$
and let
$g:\Theta\rightarrow\mathbf{2}$ be defined as
\begin{diagram}
\Theta\times\Theta & \rTo^f & \mathbf{2} \\
\uTo^{\triangle_{\Theta}} & & \dTo_{\tt neg}\\
\Theta & \rTo_g & \mathbf{2}\\
\end{diagram}
We show that $g$ is representable by $f$ at $\Omega$. For every $X\!\in\!\Theta$ we have
$f(X,\Omega)=1$. On the other hand by the property of $\delta$, for
any $X\!\in\!\Theta$, $\delta(X)\!\not\in\!X$, and so $f(X,X)=0$,
thus $g(X)={\tt neg}(f(X,X))=1$. Whence $g(X)=f(X,\Omega)$ for all
$X\!\in\!\Theta$.
\end{proof}

\section{Dominating  Functions}
Ackermann's function is a recursive (computable) function which is
not primitive recursive (see e.g. \cite{epstein}). The class of
primitive recursive functions is the smallest class which contains
the initial functions, i.e.,
\begin{itemize}\parskip=-5pt%\itemindent=-10pt
\item the constant zero function ${\tt
z}(x)=0$,
\item the successor function ${\tt s}(x)=x+1$ and
\item  the
projection functions ${\tt p}_i^n(x_1,\cdots,x_n)=x_i$ for any
$1\leqslant i\leqslant n\in\mathbb{N}$,
\end{itemize}
   and is closed under
   \begin{itemize}\parskip=-5pt%\itemindent=-10pt
   \item composition and
   \item primitive recursion,
   \end{itemize}
 i.e., for primitive recursive functions $f,f_1,\cdots,f_n$ the function
 ${\tt comp}(f;f_1,\ldots,f_n)$ defined by $(x_1,\cdots,x_m)\mapsto f(f_1(x_1,\cdots,x_m),\ldots,f_n(x_1,\cdots,x_m))$
 is also
 primitive recursive,  and also  for primitive recursive functions
  $g$ and $h$ the function ${\tt prim.rec}(g,h)$ defined by $(x_1,\cdots,x_n,0)\mapsto g(x_1,\cdots,x_n)$
  and \newline\centerline{\quad $(x_1,\cdots,x_n,x+1)\mapsto h\big({\tt prim.rec}(g,h)(x_1,\cdots,x_n,x),x_1,\cdots,x_n,x\big)$} is also primitive
  recursive.

  The class of recursive functions contains the same initial functions and is closed under composition, primitive recursion, and also
  \begin{itemize}\parskip=-5pt%\itemindent=-10pt
\item  minimization,
  \end{itemize}
  i.e., for recursive function $f$ the function
   ${\tt min}(f)$ defined by $(x_1,\cdots,x_n)\mapsto y$ where $y$ is
   the least natural number that satisfies  $f(x_1,\cdots,x_n,y)=0$ is also recursive;
   note that then  for all $z<y$ we have $f(x_1,\cdots,x_n,z)\neq 0$,
    and if there is no such $y$ then ${\tt min}(f)$ is undefined
    on $x_1,\cdots,x_n$.

    In fact, Ackermann's function is not only a
  non--primitive recursive (and a recursive) function, but it also
  dominates all the primitive recursive functions (see e.g. \cite{epstein}). A function
  $g$ is said to dominate a function $f$ (or $f$ is dominated by
  $g$) when for all but finitely many $x$'s  the inequality
  $g(x)>f(x)$ holds. Here we show a way of dominating a given
  enumerable list of functions by diagonalization. Before that let
  us note that the set of all primitive recursive functions can be
  (recursively) enumerated:  let $\#(f)$ denote the (G\"odel)
  code of the function $f$ and define  the G\"odel code of a
  primitive recursive function inductively:
%  \begin{itemize}\parskip=-5pt%\itemindent=-10pt
%\item  $\#({\tt z})=1$,
%\item $\#({\tt s})=2$,
%\item $\#({\tt p}_i^n)=2^i\cdot 3^n$,
%\item $\#({\tt comp}(f;f_1,\ldots,f_n))=5^{\#(f)}\cdot 7^{\#(f_1)}
% \cdots\mathfrak{p}_{n+2}^{\#(f_n)}$,
% \item $\#({\tt prim.rec}(g,h))=3^{\#(g)}\cdot 5^{\#(h)}$.
%  \end{itemize}

\noindent \begin{tabular}{llr}
 $\bullet\,\,\,\#({\tt z})=1$, & $\bullet\,\,\,\#({\tt s})=2$,  & $\bullet\,\,\,\#({\tt p}_i^n)=2^i\cdot 3^n$, \\
  $\bullet\,\,\,\#({\tt comp}(f;f_1,\ldots,f_n))=5^{\#(f)}\cdot 7^{\#(f_1)}
 \cdots\mathfrak{p}_{n+2}^{\#(f_n)}$,  & and &  $\bullet\,\,\,\#({\tt prim.rec}(g,h))=3^{\#(g)}\cdot 5^{\#(h)}$,     \end{tabular}

\noindent
   where $\mathfrak{p}_i$ is
 the $i-$th prime number (thus, $\mathfrak{p}_0=2, \mathfrak{p}_1=3,
 \mathfrak{p}_2=5, \mathfrak{p}_3=7,  \cdots$). Let $\nu_n$ be
 the primitive recursive function with code $n$, if $n$ is a code of such a
 function; if $n$ is not a code for a primitive recursive function
 (such as $n=3$ or $n=10$) then let $\nu_n$ be the constant zero
 function ${\tt z}$.
  So, $\nu_0$, $\nu_1$, $\nu_2$, $\cdots$ lists all the
  primitive recursive functions.
 We show the existence of a unary function  that dominates all the functions $\nu_i$'s
  in the above list.

\begin{theorem}\label{ack}
For a list of  functions
$f_1,f_2,f_3,\cdots:\mathbb{N}\rightarrow\mathbb{N}$,
 there exists a unary function  $\mathbb{N}\rightarrow\mathbb{N}$ that dominates them all.
\end{theorem}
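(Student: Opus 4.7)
The plan is to instantiate the diagonal diagram of Theorem~\ref{thm:cantor} with $B=D=\mathbb{N}$, taking $\alpha\colon\mathbb{N}\to\mathbb{N}$ to be the successor $\alpha(x)=x+1$, which manifestly has no fixed point on $\mathbb{N}$. For $f\colon\mathbb{N}\times\mathbb{N}\to\mathbb{N}$ I would use the running maximum of the given list,
$$
f(n,m)=\max\{f_i(n)\mid 1\leq i\leq m\},
$$
with the convention $f(n,0)=0$ so that $f$ is total. Following the standard diagonal square, the function
$$
g(n)\;=\;\alpha\bigl(f(n,n)\bigr)\;=\;1+\max\{f_i(n)\mid 1\leq i\leq n\}
$$
would be my candidate unary dominator, produced purely by the universal schema.

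Verification of domination is then immediate: for any fixed $k$ and any $n\geq k$, the index $k$ lies in the set $\{1,\ldots,n\}$, so $g(n)\geq 1+f_k(n)>f_k(n)$; hence $g(n)>f_k(n)$ fails only on the finite set $\{0,1,\ldots,k-1\}$, which is precisely what ``$g$ dominates $f_k$'' requires. As an independent sanity check that matches Theorem~\ref{thm:cantor}, $g$ cannot be representable by $f$ at any $m\in\mathbb{N}$: if $g(n)=f(n,m)$ held for every $n$, then specializing to $n=m$ would yield $\alpha\bigl(f(m,m)\bigr)=f(m,m)$, a fixed point of the successor, which is absurd.

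I do not foresee a substantive obstacle; the construction is essentially forced once one recognizes that the initial-segment maximum $\max_{i\leq n}f_i(n)$ is the natural diagonal value for a countable list of functions, and that adding one is the simplest fixed-point-free operation on $\mathbb{N}$. The only minor points needing care are (i) fixing a total convention at $m=0$ and checking that the running-max $f$ is a bona fide function $\mathbb{N}\times\mathbb{N}\to\mathbb{N}$, and (ii) matching the ``all but finitely many'' clause in the definition of domination against what the initial-segment maximum delivers from index $k$ onward. The resulting $g$ is the natural abstract counterpart of the Ackermann-style diagonal alluded to at the start of this section, now obtained as a direct instance of the universal fixed-point schema.
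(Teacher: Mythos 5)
Your construction is essentially identical to the paper's: the paper takes $f(n,m)=\max_{i\leqslant n}f_i(m)$ and composes with the successor along the diagonal to get $g(x)=\max_{i\leqslant x}f_i(x)+1$, which agrees with your $g$ (your $f$ is merely the transpose of theirs, and the two coincide on the diagonal), and the domination argument is the same. The proposal is correct; your extra care about the empty maximum at $n=0$ is a harmless refinement the paper glosses over.
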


\begin{proof}
Define the   function
$f:\mathbb{N}\times\mathbb{N}\rightarrow\mathbb{N}$  as
$f(n,m)=\max_{(i\leqslant n)}f_i(m)$ and let $g$ be  defined by the following diagram where ${\tt s}$ is the successor function:
\begin{diagram}
\mathbb{N}\times\mathbb{N} & \rTo^f & \mathbb{N} \\
\uTo^{\triangle_{\mathbb{N}}} & & \dTo_{\tt s}\\
\mathbb{N} & \rTo_g & \mathbb{N}\\
\end{diagram}
  In fact, the function
$g:\mathbb{N}\rightarrow\mathbb{N}$ is defined as
$g(x)=\max_{(i\leqslant x)}f_i(x)+1$. Since the successor function
does not have any fixed--point, the function $g$ is not equal to any
of $f_i$'s. Moreover,   $g$ dominates all the $f_i$'s, since for any
$m\in\mathbb{N}$ and any $x\!\geqslant\!m$ by the definition of $g$
we have {$g(x)\!>\!\max_{(i\leqslant x)}f_i(x)\!\geqslant\!f_m(x)$.}
\end{proof}

For dominating the primitive recursive functions (some of which are
not unary) we can consider their unarized version: let
$\rho_0,\rho_1,\rho_2,\cdots$ be the list of unary functions
$\mathbb{N}\rightarrow\mathbb{N}$ defined as
$\rho_i(x)=\nu_i(x,\cdots,x)$. Whence
$\rho_0,\rho_1,\rho_2,\cdots$ lists all the unary primitive
recursive functions, and the construction of Theorem~\ref{ack}
produces a unary function which dominates all the unary primitive
recursive functions. Let us note that the function $g$ obtained in
the proof of Theorem~\ref{ack} is computable (intuitively) and so
recursive (by Church's Thesis); one can show directly that the above
function $g$ is recursive (without appealing to Church's Thesis) by
some detailed work through Recursion Theory (cf. e.g.
\cite{epstein}).

\section{Conclusions}
There are many interesting questions and suggestions for further
research at the end of \cite{yan} which motivated the research
presented in this paper; most of the questions remain unanswered as
of today. The proposed schema, i.e., the diagram of the proof of
Theorem~\ref{thm:cantor},
%\begin{diagram}
%B\times B & \rTo^f & D \\
%\uTo^{\triangle_B} & & \dTo_\alpha\\
%B & \rTo_g & D\\
%\end{diagram}
can be used as a criterion for testing whether an argument is
diagonal or not. What makes this argument (of the non--existence of
a fixed--point for $\alpha:D\rightarrow D$) {\em diagonal} is the diagonal
function $\triangle_B:B\rightarrow B\times B$.  In most of our
arguments we had $D={\bf 2}=\{0,1\}$ and $\alpha={\tt neg}$ by which
the proof was constructed by diagonalizing out of the function
$f:B\times B\rightarrow D$. Only in Theorem~\ref{ack} we had
$D=\mathbb{N}$ and $\alpha={\tt s}$ (the successor function) which
was used for generating a dominating function. We could have used
the diagonalizing out argument by setting $D={\bf 2}=\{0,1\}$ and
$\alpha={\tt neg}$ for the function
$\tilde{f}:\mathbb{N}\times\mathbb{N}\rightarrow{\bf 2}$, defined by
$$\tilde{f}(n,m)=\left\{\begin{array}{l} 0 \quad \textrm{if}\ \  f_n(m)=0
\\ 1  \quad \textrm{if}\ \ f_n(m)\neq 0\\
\end{array} \right.$$
Then the constructed
function $\tilde{g}:\mathbb{N}\rightarrow{\bf 2}$ by
$\tilde{g}(n)={\tt neg}(\tilde{f}(n,n))$ differs from all the
functions $f_i$'s (because $\tilde{g}(i)\neq f_i(i)$ for all $i$).
 So, this way one could construct a non--primitive recursive (but recursive) function,
 though this function does not dominate all the primitive recursive functions.

For other exciting questions and examples of theorems or paradoxes
which seem to be self--referential we refer the reader to the last
section of \cite{yan}. It will be nice to see some of those
proposals or other more phenomena fit in the above universal
diagonal schema.

{%\footnotesize
 \paragraph{Acknowledgements.}  
The authors warmly thank  the comments and
suggestions of Professor Noson S.~Yanofsky who has wholeheartedly
encouraged the research of this paper. This is a part of the first
author's Ph.D. thesis in Tarbiat Modares University written under the supervision of the second
author who is partially supported by  a research grant (No. S/6430-1) from the University of Tabriz,  IRAN.}

%The authors warmly thank  the comments and
%suggestions of Professor Noson S. Yanofsky who has wholeheartedly
%encouraged the research of this paper. This is a part of the first
%author's Ph.D. thesis in Tarbiat Modares University written under the supervision of the second
%author who is partially supported by  a research grant (No. S/6430-1) from the University of Tabriz, IRAN.

%%%%%%%%%%%%%%%%%%%%%%%%%%%%%%%%%%%%
%%%%%%%%%%%%%%%%%%%%%%%%%
%%%%%%%%%%%%%%%%%%%%%%%%%%%%%%%%%%%%
%%%%%%%%%%%%%%%%%%%%%%%%%
%%%%%%%%%%%%%%%%%%%%%%%%%%%%%%%%%%%%
%%%%%%%%%%%%%%%%%%%%%%%%%

\end{document}